\def\binomh#1#2{ \scalebox{.3}[1.2]{\textbf{)}}{\genfrac{}{}{0pt}{}{#1}{#2}}\scalebox{.3}[1.2]{\textbf{(}} }
\def\hpT45{${\cal HPT}_{\!4,5}$}
\def\hpTT46{${\cal HPT}_{\!4,6}$}
\def\hpt4q{${\cal HPT}_{\!4,q}$}
\newtheorem{theorem}{Theorem}
\newtheorem{lemma}{Lemma}[section]
\newtheorem{remark}{Remark}
\title{\bf Fibonacci words in hyperbolic Pascal triangles 
}
\author{L\'aszl\'o N\'emeth\footnote{University of Sopron,  Institute of Mathematics, Hungary; \textit{nemeth.laszlo@uni-sopron.hu}}}
\date{}
\date{}
\begin{document}

\maketitle

\begin{abstract}
	The hyperbolic Pascal triangle \hpt4q  $(q\ge5)$ is a new mathematical construction, which is a geometrical generalization of Pascal's arithmetical triangle. In the present study we show that a natural pattern of rows of \hpT45 is almost the same as the sequence consisting of every second term of the well-known Fibonacci words. Further, we give a generalization of the Fibonacci words using the hyperbolic Pascal triangles. The geometrical properties of a \hpt4q imply a graph structure between the finite Fibonacci words.\\[1mm]
	{\em Key Words: Hyperbolic Pascal triangle, Fibonacci word.}\\
	{\em MSC code:  05B30, 11B39}  
 
  The final publication is available at Acta Univ. Sapientiae, Mathematica, 2017 (www.acta.sapientia.ro/acta-math/matematica-main.htm).    
\end{abstract}


\section{Introduction}\label{sec:introduction}

The hyperbolic Pascal triangle \hpt4q  $(q\ge5)$ is a new mathematical construction, which is a geometrical generalization of Pascal's arithmetical triangle \cite{BNSz}.  In the present article we discuss the properties of the patterns of the rows of \hpt4q, which patterns give a new kind of generalizations of the well-known Fibonacci words. Our aim is to show the connection between the Fibonacci words and the hyperbolic Pascal triangles.   

After a short introduction of the hyperbolic Pascal triangles and the finite Fibonacci words we define a new family of Fibonacci words and we present the relations between the hyperbolic Pascal triangles and the newly generalized Fibonacci words. Their connections will be illustrated by figures for better comprehension. 
As the hyperbolic Pascal triangles are based on the hyperbolic regular lattices, their geometrical properties provide a graph structure between the  generalized finite Fibonacci words. {The extension of this connection could provide a new family of binary words.}

\subsection{Hyperbolic Pascal triangles} 

In the hyperbolic plane there are infinite types of regular mosaics (or regular lattices), that are denoted by the Schl\"afli symbol $\{p,q\}$, where $(p-2)(q-2)>4$. Each regular mosaic induces a so-called hyperbolic Pascal triangle, 
following and generalizing the connection between  classical Pascal's triangle and the Euclidean regular square mosaic $\{4,4\}$
(for more details see \cite{BNSz,NSz_alt,NSz2}). 

The hyperbolic Pascal triangle \hpt4q based on the mosaic $\{p,q\}$ can be depicted as a digraph, where the vertices and the edges are the vertices and the edges of a well-defined part of the lattice $\{p,q\}$, respectively. Further, the vertices possess a value each giving the number of the different shortest paths from the base vertex. Figure~\ref{fig:Pascal_46_layer5} illustrates the hyperbolic Pascal triangle when $\{p,q\}=\{4,6\}$. 
Generally, for a $\{4,q\}$ configuration  the base vertex has two edges, the leftmost and the rightmost vertices have three, the others have $q$ edges. The square shaped cells surrounded by appropriate edges  correspond to the regular squares in the mosaic.
Apart from the winger elements, certain vertices (called ``Type A'' for convenience) have two ascendants and $q-2$ descendants, the others (``Type B'') have one ascendant and $q-1$ descendants. In the figures of the present study we denote the  type $A$ vertices by red circles and the  type $B$ vertices by cyan diamonds, while the wingers by white diamonds. The vertices which are $n$-edge-long far from the base vertex are in row $n$.

\begin{figure}[h!]
	\centering
	\includegraphics[width=0.95\linewidth]{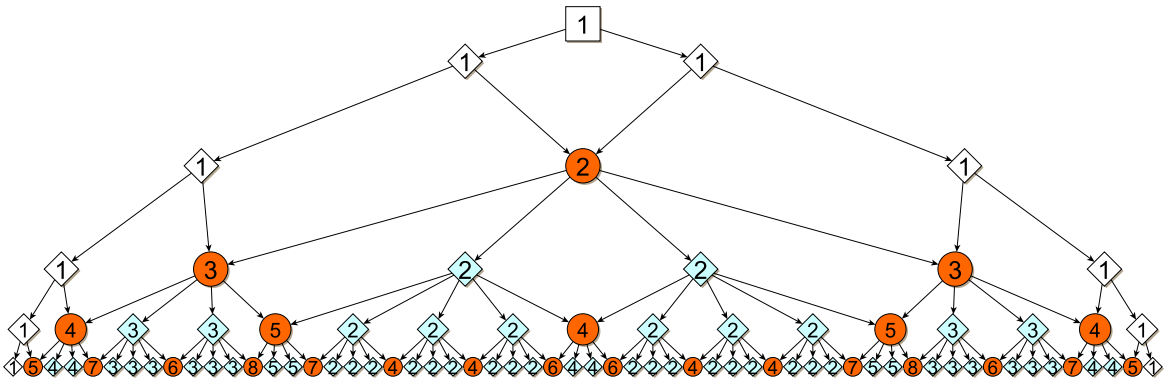}
	\caption{Hyperbolic Pascal triangle linked to $\{4,6\}$ up to row 5}
	\label{fig:Pascal_46_layer5}
\end{figure}

The general method of deriving the triangle is the following: going along the vertices of the $j^{th}$ row, according to the type of the elements (winger, $A$, $B$), we draw the appropriate number of edges downwards (2, $q-2$, $q-1$, respectively). Neighbour edges of two neighbour vertices of the $j^{th}$ row meet in the $(j+1)^{th}$ row, constructing a type $A$  vertex. The other descendants of row $j$ are type $B$ in row $j+1$. Figure~\ref{fig:Pascal growing_4q2} also shows a growing algorithm of the different types except the leftmost items, that are always types $B$ and $A$. (Compare Figure \ref{fig:Pascal growing_4q2} with Figures \ref{fig:Pascal_46_layer5}  and \ref{fig:Pascal_layer5_AB}.)

In the sequel, $\binomh{n}{k}$ denotes the $k^\text{th}$ element in row $n$, whose value is either the sum of the values of its two ascendants or the value of its unique ascendant. We note, that the hyperbolic Pascal triangle has the property of vertical symmetry. 

In the following we generalize the Fibonacci word in a new (but not brand new) way and show that this generalization is the same as the patterns of nodes types $A$ and $B$ in rows of \hpt4q.

\begin{figure}[h!]
	\centering
	\includegraphics[width=8cm]{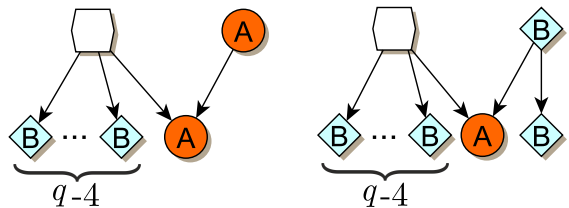}
	\caption{Growing method in Pascal triangles (except for the two leftmost items)}
	\label{fig:Pascal growing_4q2}
\end{figure}

\subsection{Fibonacci words}

The most familiar and the most studied binary word in mathematics is the Fibonacci word. The finite Fibonacci words,  $f_i$, are defined by the elements of the recurrence sequence $\{f_i\}_{i=0}^{\infty}$ over $\{0,1\}$ defined as follows
$$
f_0=1, \qquad f_1= 0, \qquad f_i=f_{i-1}f_{i-2},\qquad  (i\geq 2).
$$
It is clear, that $|f_i|= F_{i+1}$, where $F_i$ is the $i$-th Fibonacci number defined by the recurrence relation $F_i=F_{i-1}+F_{i-2}$ $(i\geq 2)$, with initial values $F_0=0$, $F_1=1$. The infinite Fibonacci word is $\boldsymbol{f}=\lim_{i\to \infty}f_i$. Table~\ref{tab:fibon_word} shows the first few Fibonacci words. 
It is also well-known that the Fibonacci morphism  ($\sigma$: $\{0,1\}\rightarrow\{0,1\}*$, $0\rightarrow01$, $1\rightarrow0$) acts between two consecutive finite Fibonacci words. For some newest properties (and further references) of Fibonacci words see \cite{Eds,Fici,lothaire,Rami2013,Rami}.

\begin{table}[htb]
	\centering 
	\begin{tabular}{rcl}
		$f_0$  & =& 1 \\
		$f_1$  & =& 0 \\
		$f_2$  & =& 01 \\
		$f_3$  & =& 010 \\
		$f_4$  & =& 01001 \\
		$f_5$  & =& 01001010 \\
		$f_6$  & =& 0100101001001 \\
		$f_7$  & =& 010010100100101001010 
	\end{tabular}
	\caption{The first Fibonacci words}
	\label{tab:fibon_word}
\end{table}

\section{$\{4,q\}$-Fibonacci words}

There are some generalizations of Fibonacci words, one of them is the biperiodic Fibonacci word \cite{Eds,Rami}. 
For any two positive integers $a$ and $b$, the biperiodic finite Fibonacci words sequence, say $\{\widehat{f}_i\}_{i=0}^{\infty}$, is defined recursively by
$$
\widehat{f}_{0}=1, \ 
\widehat{f}_{1}=0, \ 
\widehat{f}_{2}=0^{a-1}1=00\ldots 01,$$ and  
\begin{equation*}
\widehat{f}_{i}=
\begin{cases} 
\widehat{f}_{i-1}^{a}\widehat{f}_{i-2}, & \mbox{if } i \mbox{ is even}; \\ 
\widehat{f}_{i-1}^{b}\widehat{f}_{i-2}, & \mbox{if } i \mbox{ is odd}; \end{cases}\quad  (i\geq3).  
\end{equation*}
It has been proved \cite{Rami}, that if $i\geq1$ then $|\widehat{f}_i|= F_i^{(a,b)}$, where
for any two positive integers $a$ and $b$, the biperiodic Fibonacci sequence $\{F_i^{(a,b)}\}_{i=0}^{\infty}$ is defined recursively by
\begin{equation}\label{eq:bi-fib}
F_{0}^{(a,b)}=0, \ 
F_{1}^{(a,b)}=1, \ 
F_{i}^{(a,b)}=
\begin{cases} aF_{i-1}^{(a,b)}+F_{i-2}^{(a,b)}, &\mbox{if } i \mbox{ is even}; \\ 
bF_{i-1}^{(a,b)}+F_{i-2}^{(a,b)}, &\mbox{if } i \mbox{ is odd}; \end{cases}\quad (i\geq2).  
\end{equation}
The first few terms are $0$, $1$, $a$, $ab+1$, $a^2b+2a$, $a^2b^2+3ab+1$, $a^3b^2+4a^2b+3a$, $a^3b^3+5a^2b^2+6ab+1$.
When $a=b=k$, this generalization gives the $k$-Fibonacci numbers and in the case  $a=b=1$, we recover the original Fibonacci numbers \cite{Eds,Rami}. 

Now let us define  the finite  $\{4,q\}$-Fibonacci words sequence $\{f_i^{[4,q]}\}_{i=0}^{\infty}$, shortly $\{f_i^{[q]}\}_{i=0}^{\infty}$, where $q\geq5$, a new family of generalized Fibonacci words, and 
\begin{equation}\label{eq:4dFib}
f_{0}^{[q]}=1, \ 
f_{1}^{[q]}=0, \ 
f_{i}^{[q]}=
\begin{cases} {\left( f_{i-1}^{[q]}\right) }^{q-4} f_{i-2}^{[q]}, &\mbox{if } i \mbox{ is even}; \\ 
f_{i-1}^{[q]} f_{i-2}^{[q]}, &\mbox{if } i \mbox{ is odd}; \end{cases}\quad (i\geq2).  
\end{equation}
These new $\{4,q\}$-Fibonacci words are almost the same as the biperiodic Pascal words, $\widehat{f}_{i}$, if $a=1$ and $b=q-4$. As the definitions for the second items vary,  the  odd and even situations are reversing. 
If $q=5$, then $\{4,q\}$-Fibonacci words coincide with the classical Fibonacci words. (In Table~\ref{tab:fibon_wordq6} we list the first few $\{4,6\}$-Fibonacci words.)
The infinite $\{4,q\}$-Fibonacci word is defined as $\boldsymbol{f}^{[q]}=\lim_{i\to \infty}f_{i}^{[q]}$ and $\boldsymbol{f}= \boldsymbol{f}^{[5]}$ (see Table~\ref{tab:inffibon_word}). 

\begin{table}[htb]
	\centering 
	\begin{tabular}{rcll}
		$f_{0}^{[6]}$  & =& 1  \\
		$f_{1}^{[6]}$  & =& 0 \\
		$f_{2}^{[6]}$  & =& 001 \\
		$f_{3}^{[6]}$ & =& 0010 \\
		$f_{4}^{[6]}$  & =& 00100010001  \\
		$f_{5}^{[6]}$  & =& 001000100010010 \\
		$f_{6}^{[6]}$  & =& 00100010001001000100010001001000100010001 
	\end{tabular}
	\caption{The first few $\{4,6\}$-Fibonacci words}
	\label{tab:fibon_wordq6}
\end{table}

\begin{table}[htb]
	\centering 
	\begin{eqnarray*}	
		\boldsymbol{f}^{[5]}&=&01001010010010100101001001010010010100101001001010010100\ldots\\
		\boldsymbol{f}^{[6]}&=&00100010001001000100010001001000100010001001000100010010\ldots\\
		\boldsymbol{f}^{[7]}&=&00010000100001000010001000010000100001000010001000010000\ldots\\
		\boldsymbol{f}^{[8]}&=&00001000001000001000001000001000010000010000010000010000\ldots
	\end{eqnarray*}
	\caption{Some infinite $\{4,q\}$-Fibonacci words}
	\label{tab:inffibon_word}
\end{table}

In case of the extension of definition \eqref{eq:4dFib} to $q=4$, the  $f_{2k}^{[4]}=1$,  $f_{2k+1}^{[4]}=1\ldots 10$ (the number of $1$'s is $k$) for any $k\geq1$ and there is no limit of $f_i^{[4]}$ if  $i\to \infty$. Therefore, we investigate the $\{4,q\}$-Fibonacci words, when $q\geq5$. 

\medskip

Let $\sigma^{[q]}$ be the $\{4,q\}$-Fibonacci morphism defined by \begin{equation}
\{0,1\}\rightarrow\{0,1\}*,\quad 0\rightarrow0^{q-4}10, \  1\rightarrow0^{q-4}1, \label{eq:def4q}
\end{equation}
where $q\geq5$. 
\begin{theorem}\label{th:morf}
	The $\{4,q\}$-Fibonacci morphism, $\sigma^{[q]}$, acts between every second words of $\{4,q\}$-Fibonacci words, so that 
	\begin{equation}
	\sigma^{[q]}( f_{i-2}^{[q]}) =f_{i}^{[q]},\qquad (i\geq2). 
	\end{equation}
\end{theorem}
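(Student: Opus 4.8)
The natural approach is induction on $i$. The base cases $i=2$ and $i=3$ can be checked directly: applying $\sigma^{[q]}$ to $f_0^{[q]}=1$ gives $0^{q-4}1 = f_2^{[q]}$ (since $f_2^{[q]} = (f_1^{[q]})^{q-4}f_0^{[q]} = 0^{q-4}1$), and applying $\sigma^{[q]}$ to $f_1^{[q]}=0$ gives $0^{q-4}10 = f_3^{[q]}$ (since $f_3^{[q]} = f_2^{[q]}f_1^{[q]} = 0^{q-4}1\cdot 0$). So the statement holds for the two smallest values, establishing the anchor for both parity classes.

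For the inductive step, I would assume $\sigma^{[q]}(f_{j-2}^{[q]}) = f_j^{[q]}$ for all $j < i$ and split into the even and odd cases of the defining recurrence \eqref{eq:4dFib}. Since $\sigma^{[q]}$ is a morphism, it commutes with concatenation, so $\sigma^{[q]}$ applied to a product of words is the product of the images. If $i$ is odd, then $f_{i-2}^{[q]} = f_{i-3}^{[q]} f_{i-4}^{[q]}$ (as $i-2$ is odd), so
\begin{equation*}
\sigma^{[q]}(f_{i-2}^{[q]}) = \sigma^{[q]}(f_{i-3}^{[q]})\,\sigma^{[q]}(f_{i-4}^{[q]}) = f_{i-1}^{[q]} f_{i-2}^{[q]} = f_i^{[q]},
\end{equation*}
where the middle equality is the induction hypothesis (applied to indices $i-1$ and $i-2$, both smaller than $i$) and the last equality is the odd-case recurrence for $f_i^{[q]}$. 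If $i$ is even, then $i-2$ is even, so $f_{i-2}^{[q]} = (f_{i-3}^{[q]})^{q-4} f_{i-4}^{[q]}$, hence
\begin{equation*}
\sigma^{[q]}(f_{i-2}^{[q]}) = \bigl(\sigma^{[q]}(f_{i-3}^{[q]})\bigr)^{q-4}\,\sigma^{[q]}(f_{i-4}^{[q]}) = \bigl(f_{i-1}^{[q]}\bigr)^{q-4} f_{i-2}^{[q]} = f_i^{[q]},
\end{equation*}
again by the induction hypothesis and the even-case recurrence.

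The only subtlety — and the main thing to get right — is the bookkeeping of parities and indices: one must check that the induction hypothesis is being invoked at indices strictly less than $i$ and of the correct parity, and that the base cases $i=2,3$ genuinely cover enough to launch the recursion for both even and odd $i$ (they do, since the step for $i$ always refers back to $i-1$ and $i-2$). There is a slight gap near the bottom: the step for $i=4$ needs $\sigma^{[q]}(f_2^{[q]}) = f_4^{[q]}$ and $\sigma^{[q]}(f_1^{[q]}) = f_3^{[q]}$; the latter is the base case $i=3$, and the former is $i=2$, so the recursion is self-consistent. No hard estimates or combinatorial lemmas are needed — the whole proof is the observation that $\sigma^{[q]}$ is a morphism together with a careful parity-indexed induction.
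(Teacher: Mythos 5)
Your proof is correct and follows essentially the same route as the paper: induction on $i$ with base cases $i=2,3$, then using that $\sigma^{[q]}$ commutes with concatenation together with the parity-matched recurrence \eqref{eq:4dFib} for $f_{i-2}^{[q]}$ and $f_i^{[q]}$. Your only additions are the explicit verification of the base cases and the parity bookkeeping, which the paper leaves as "clearly true" and "similar".
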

\begin{proof} 
	We prove the assertion by induction on $i$. The statement is clearly true for $i=2,3$.
	Now we assume, that the result holds for any $j$, when $4\leq j< i$. Let $i$ be first even. Then 
	\begin{eqnarray*}
		\sigma^{[q]}( f_{i-2}^{[q]})
		&=&\sigma^{[q]}\left( {\left( f_{i-3}^{[q]}\right) }^{q-4} f_{i-4}^{[q]}\right) 
		= {\left(\sigma^{[q]} (f_{i-3}^{[q]}) \right)}^{q-4} \sigma^{[q]} (f_{i-4}^{[q]})\\
		&=& {\left( f_{i-1}^{[q]}\right) }^{q-4} f_{i-2}^{[q]}=f_{i}^{[q]}
	\end{eqnarray*}
	If $i$ is odd the proof is similar, 
	$\sigma^{[q]}( f_{i-2}^{[q]})
	=\sigma^{[q]}\left( { f_{i-3}^{[q]} } f_{i-4}^{[q]}\right) 
	= \cdots =f_{i}^{[q]}.
	$
\end{proof} 

\begin{remark}
	$\sigma^{[5]}=\sigma^2$ and $\sigma^2(f_i)=f_{i+2}$.
\end{remark}

\section{Connection between \hpt4q and $\{4,q\}$-Fibonacci \newline words}

We consider again the hyperbolic Pascal triangle \hpt4q. Let us denote the left and right nodes '1'  by type $B$ (compare Figures \ref{fig:Pascal_46_layer5} and \ref{fig:Pascal_46_AB5}). Let $a_n$ and $b_n$ be the number of vertices of type $A$ and $B$ in row $n$, respectively. Further let 
\begin{equation}\label{sn}
s_n=a_n+b_n, 
\end{equation}
that gives the total number of the vertices in row $n\ge0$.  Then the ternary homogeneous recurrence relation
\begin{equation}\label{recur1}
s_n=(q-1)s_{n-1}-(q-1)s_{n-2}+s_{n-3} \qquad (n\ge4)
\end{equation}
holds with initial values 
$s_0=1$, $s_1=2$, $s_2=3$, $s_3=q$ (recall, that $q\ge5$). For the explicit form see \cite{BNSz}.

\begin{lemma}\label{lem:sn}
	If $n\geq1$, then
	\begin{equation}\label{eq:sn}
	s_n=u_n+2,
	\end{equation}
	where $u_1=0$, $u_2=1$ and  $u_n=(q-2)u_{n-1}-u_{n-2}$, if $n\geq3$.
\end{lemma}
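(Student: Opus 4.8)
The plan is to prove the closed relation $s_n=u_n+2$ by induction on $n$, using the ternary recurrence \eqref{recur1} for $s_n$ together with the binary recurrence $u_n=(q-2)u_{n-1}-u_{n-2}$ defining the auxiliary sequence. First I would check the base cases. From the stated initial values $s_1=2$, $s_2=3$, $s_3=q$ and $u_1=0$, $u_2=1$, the identity reads $2=0+2$ and $3=1+2$, which hold; I would also record $u_3=(q-2)u_2-u_1=q-2$, so $u_3+2=q=s_3$, giving a third verified case (useful because \eqref{recur1} only kicks in at $n\ge4$).

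For the inductive step, assume $s_m=u_m+2$ for all $m<n$ with $n\ge4$. Substituting into \eqref{recur1} gives
\begin{equation*}
s_n=(q-1)(u_{n-1}+2)-(q-1)(u_{n-2}+2)+(u_{n-3}+2)=(q-1)u_{n-1}-(q-1)u_{n-2}+u_{n-3}+2,
\end{equation*}
so it suffices to show $(q-1)u_{n-1}-(q-1)u_{n-2}+u_{n-3}=u_n$. The natural way to see this is to observe that $u_n$ itself satisfies the same ternary recurrence as $s_n$: from $u_n=(q-2)u_{n-1}-u_{n-2}$ one gets, by also writing $u_{n-1}=(q-2)u_{n-2}-u_{n-3}$, that
\begin{equation*}
u_n-u_{n-1}=(q-2)(u_{n-1}-u_{n-2})-(u_{n-2}-u_{n-3}),
\end{equation*}
which rearranges to exactly $u_n=(q-1)u_{n-1}-(q-1)u_{n-2}+u_{n-3}$. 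Hence the right-hand side above equals $u_n+2$, completing the induction. Equivalently, one can phrase this as: the constant sequence $2$ and the sequence $u_n$ both solve \eqref{recur1} (the constant because $1-(q-1)+(q-1)-1=0$... wait, $1=(q-1)-(q-1)+1$, yes), so their sum does too, and it agrees with $s_n$ on the three initial terms $n=1,2,3$, forcing equality for all $n\ge1$.

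There is no real obstacle here; the only mild subtlety is bookkeeping the index ranges — \eqref{recur1} is valid for $n\ge4$ while the $u$-recurrence starts at $n\ge3$, so the three cases $n=1,2,3$ must be handled by hand before the induction engages, and one should make sure the derivation of the ternary recurrence for $u_n$ uses indices $n-1,n-2,n-3$ all $\ge1$, i.e. is applied only for $n\ge4$. I would present the argument in the "superposition of solutions" form, since it is the cleanest: verify that both $(u_n)$ and the constant $2$ satisfy \eqref{recur1}, check the three initial values, and invoke uniqueness of solutions to a third-order linear recurrence given three initial conditions.
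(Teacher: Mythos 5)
Your argument is correct and is essentially the paper's own proof: both reduce the claim to the observation that the binary recurrence $u_n=(q-2)u_{n-1}-u_{n-2}$ implies the ternary recurrence \eqref{recur1} (the paper adds $u_n=(q-2)u_{n-1}-u_{n-2}$ and $-u_{n-1}=-(q-2)u_{n-2}+u_{n-3}$, which is the same algebra as your telescoping step), and that $s_n-2$ satisfies \eqref{recur1} with the same initial values $0,1,q-2$. Your explicit induction and ``superposition'' phrasing are just a more detailed write-up of the identical idea, with the index bookkeeping handled correctly.
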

\begin{proof} 
	Let $u_n=s_n-2$, where $n\geq1$. Then  $u_1=0$,  $u_2=1$  and $u_3=s_{3}-2=q-2=(q-2)u_{2}-u_{1}$. 
	
	For general cases corresponding to $n\ge4$, firstly, we have 
	\begin{eqnarray*}
		u_n&=& (q-1)s_{n-1}-(q-1)s_{n-2}+s_{n-3}-2\\
		&=& (q-1)(s_{n-1}-2)-(q-1)(s_{n-2}-2)+(s_{n-3}-2)\\
		&=& (q-1)u_{n-1}-(q-1)u_{n-2}+u_{n-3}.
	\end{eqnarray*}
	This also means, that $\{s_n\}$ and $\{u_n\}$ have the same ternary recurrence relation (with different initial values). 
	
	Secondly, we show, that  $\{u_n\}$ can be described by a binary recurrence relation too. (In contrast $\{s_n\}$  cannot.)
	Adding the equations $u_n=(q-2)u_{n-1}-u_{n-2}$ and $-u_{n-1}=-(q-2)u_{n-2}+u_{n-3}$, we obtain $u_n=(q-1)u_{n-1}-(q-1)u_{n-2}+u_{n-3}$. 
\end{proof}

The  first few terms of $\{u_i\}$ are $0$, $1$, $q-2$, $q^2-4q+3$, $q^3-6q^2+10q-4$, $q^4-8q^3+21q^2-20q+5$.

\begin{lemma}\label{lem:un}
	Both of the sub-sequences consisting of every second term of $\{F_i^{(a,b)}\}$ satisfy the relation 
	\begin{equation}\label{eq:xi}
	x_{i}=(ab+2)x_{i-2}-x_{i-4},\qquad (i\geq 4).
	\end{equation}
	Moreover,
	if $n\geq2$ then 
	\begin{equation}\label{eq:un}
	u_n=F_{2n-2}^{(1,q-4)}.
	\end{equation}
\end{lemma}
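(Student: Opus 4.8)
The plan is to establish \eqref{eq:xi} first by a direct manipulation of the defining recurrences \eqref{eq:bi-fib}, and then to obtain \eqref{eq:un} by matching initial conditions. For the first part, I would fix the parity of $i$ and iterate the two-step recurrence \eqref{eq:bi-fib} twice in order to relate $x_i = F_i^{(a,b)}$ to $x_{i-2}$ and $x_{i-4}$ without reference to the odd-indexed (resp.\ even-indexed) intermediate terms. Concretely, writing $F_i = aF_{i-1}+F_{i-2}$ and $F_{i-1} = bF_{i-2}+F_{i-3}$ (for $i$ even), substitute the second into the first to get $F_i = (ab+1)F_{i-2} + aF_{i-3}$; then do the analogous elimination one step lower, $F_{i-2} = (ab+1)F_{i-4}+aF_{i-5}$ and $F_{i-3}=bF_{i-4}+F_{i-5}$, and combine these to cancel the $F_{i-5}$ and $F_{i-3}$ terms, arriving at $F_i = (ab+2)F_{i-2} - F_{i-4}$. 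The odd case is completely symmetric (swap the roles of $a$ and $b$), so the same relation \eqref{eq:xi} holds for both sub-sequences, with the coefficient $ab+2$ being parity-independent — which is the reason the statement can be phrased uniformly.

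For \eqref{eq:un}, specialise to $(a,b)=(1,q-4)$, so $ab+2 = q-2$, and note that the even-indexed sub-sequence $v_n := F_{2n-2}^{(1,q-4)}$ then satisfies, by \eqref{eq:xi}, the binary recurrence $v_n = (q-2)v_{n-1} - v_{n-2}$ for $n \ge 4$ — the very same recurrence that defines $\{u_n\}$ in Lemma \ref{lem:sn}. It remains to check that the two sequences agree on enough initial values. Using the explicit first terms listed after \eqref{eq:bi-fib}, with $a=1$, $b=q-4$: $F_0^{(1,q-4)}=0$, $F_2^{(1,q-4)} = a = 1$, $F_4^{(1,q-4)} = a^2b+2a = q-4+2 = q-2$. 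Hence $v_2 = F_2^{(1,q-4)} = 1 = u_2$ and $v_3 = F_4^{(1,q-4)} = q-2 = u_3$, and since both $\{u_n\}_{n\ge2}$ and $\{v_n\}_{n\ge2}$ obey $x_n = (q-2)x_{n-1}-x_{n-2}$, they coincide for all $n \ge 2$, which is \eqref{eq:un}. (One could also throw in $v_1 = F_0^{(1,q-4)} = 0 = u_1$ to line up the index $n=1$, although the lemma only claims $n\ge 2$.)

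I do not expect a genuine obstacle here; the whole argument is an exercise in bookkeeping with linear recurrences. The one place to be careful is the double elimination in the first part: one must keep track of which indices are being removed and verify that the $a$- and $b$-coefficients really do combine into the single symmetric coefficient $ab+2$ and that the $F_{i-4}$ coefficient comes out to exactly $-1$ in both parities. A clean way to present this, if the direct substitution feels opaque, is to package the two-step recurrence as a $2\times 2$ transfer matrix $M_{\mathrm{even}} M_{\mathrm{odd}}$ (and $M_{\mathrm{odd}} M_{\mathrm{even}}$ for the other parity): both products have trace $ab+2$ and determinant $1$, so by Cayley–Hamilton the four-step relation \eqref{eq:xi} follows immediately, and this also makes the parity-independence transparent.
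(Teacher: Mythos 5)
Your argument is correct, and it reaches \eqref{eq:xi} by a somewhat different route than the paper. The paper proves \eqref{eq:xi} by induction on $i$: after checking the first few terms it assumes the relation for $F_{i-1}^{(a,b)}$ and $F_{i-2}^{(a,b)}$, substitutes into $F_{i}^{(a,b)}=aF_{i-1}^{(a,b)}+F_{i-2}^{(a,b)}$, and regroups via \eqref{eq:bi-fib} to reconstitute $F_{i-2}^{(a,b)}$ and $F_{i-4}^{(a,b)}$. You instead eliminate the intermediate terms directly (or, equivalently, take the trace and determinant of the product of the two parity transfer matrices and invoke Cayley--Hamilton), which avoids induction and makes the parity-independence of the coefficient $ab+2$ transparent; the matrix version is arguably the cleanest explanation of why the result is symmetric in $a$ and $b$, while the paper's induction is the more pedestrian but equally short computation. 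One small bookkeeping point: your general elimination invokes $F_{i-5}^{(a,b)}$, which is undefined when $i=4$, so that single case should be verified directly from the listed initial terms (as the paper's ``first few terms'' check does), or one should note that the backward extension $F_{-1}^{(a,b)}=1$ is consistent with \eqref{eq:bi-fib}; this is a one-line fix, not a real gap. For \eqref{eq:un} you spell out what the paper leaves implicit: specialising to $(a,b)=(1,q-4)$ gives $ab+2=q-2$, so $F_{2n-2}^{(1,q-4)}$ satisfies the binary recurrence $x_n=(q-2)x_{n-1}-x_{n-2}$ defining $\{u_n\}$ in Lemma \ref{lem:sn}, and matching the values $1$ and $q-2$ at $n=2,3$ (indeed also $0$ at $n=1$) forces the two sequences to coincide; this explicit initial-value matching is a welcome improvement on the paper's one-sentence conclusion.
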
 
\begin{proof} For the first few terms of $\{F_i^{(a,b)}\}$ the equation \eqref{eq:xi} is clearly true. We assume that for $i-1$ $(i\geq6)$ equation \eqref{eq:xi} also holds. Then  if $i$ is even, 
	\begin{eqnarray*}
		F_i^{(a,b)}&=&a F_{i-1}^{(a,b)}+F_{i-2}^{(a,b)}\\
		&=&a\left((ab+2)F_{i-3}^{(a,b)}-F_{i-5}^{(a,b)} \right)+\left((ab+2)F_{i-4}^{(a,b)}-F_{i-6}^{(a,b)} \right)\\
		&=& (ab+2)\left(aF_{i-3}^{(a,b)}+F_{i-4}^{(a,b)} \right) -\left(aF_{i-5}^{(a,b)}+F_{i-6}^{(a,b)} \right) \\
		&=& (ab+2)F_{i-2}^{(a,b)}-F_{i-4}^{(a,b)}.
	\end{eqnarray*}
	If $i$ is odd, the proof is the same. 
	For the case $a=1$ and $b=q-4$ we obtain the equation \eqref{eq:un}. 
\end{proof}

\begin{figure}[h!]
	\centering
	\includegraphics[width=0.8\linewidth]{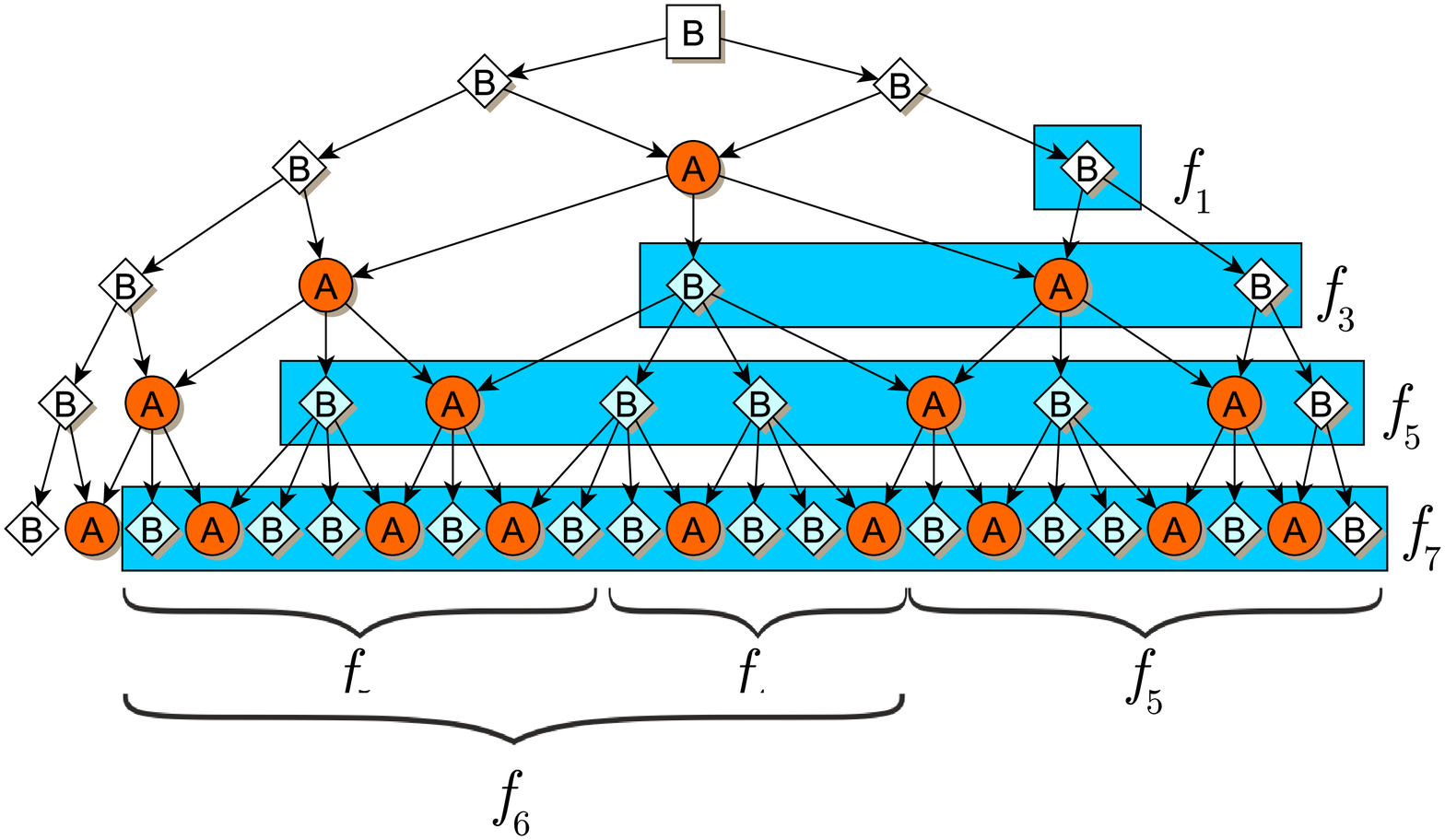}
	\caption{Pattern of \hpT45 up to row 5 and some Fibonacci words}
	\label{fig:Pascal_layer5_AB}
\end{figure}

Let $\{h_n^{[q]}\}_0^{\infty}$ be the sequence over $\{A,B\}$, where $h_n^{[q]}$  equals to the concatenations of the type of the vertices of row $n$ in \hpt4q from left to the right. Further, we call the elements of this the $\{4,q\}$-hyperbolic Pascal words (shortly $q$-hyperbolic Pascal words). For example in the case of $q=5$ (see Figure~\ref{fig:Pascal_layer5_AB}), we have
\begin{eqnarray*}
	h_0^{[5]}&=&B,\  h_1^{[5]}=BB,\  h_2^{[5]}=BAB,\  h_3^{[5]}=BABAB,
	\  h_4^{[5]}=BABABBABAB,\\
	h_5^{[5]}&=&BABABBABABBABBABABBABAB.
\end{eqnarray*}

Let us consider the bijection
\begin{equation}\label{eq:biject}
\phi: \{0,1\}\rightarrow \{A,B\},\qquad \phi(1)= A, \qquad \phi(0)= B.
\end{equation}

Let the words $u$ and $v$ be over $\{0,1\}$ and $\{A,B\}$, respectively. If $\phi(u)=v$, then we say that $u$ is equivalent to $v$ and we denote $u\equiv v$. 
For example from  Figure~\ref{fig:Pascal_layer5_AB} we have
\begin{align}
f_{1}=0&\equiv B=h_0^{[5]},&          0f_{1}=00&\equiv BB=h_1^{[5]}, \nonumber\\
f_{3}=01f_{1}=010&\equiv BAB=h_2^{[5]}, &	01f_{3}=01010&\equiv BABAB= h_3^{[5]}.\label{eq:01011}
\end{align}

Examining Figure~\ref{fig:Pascal_layer5_AB} we can recognise that every second Fibonacci word is almost equivalent to the patterns of the rows in \hpT45. (Compare the patterns of rows in Figure~\ref{fig:Pascal_46_AB5} and $f_{2n-3}^{[6]}$, $n=2,3,4$.) The following theorem gives the exact relationship between \hpt4q and  $\{4,q\}$-Fibonacci words.   

\begin{figure}[h!]
	\centering
	\includegraphics[width=0.65\linewidth]{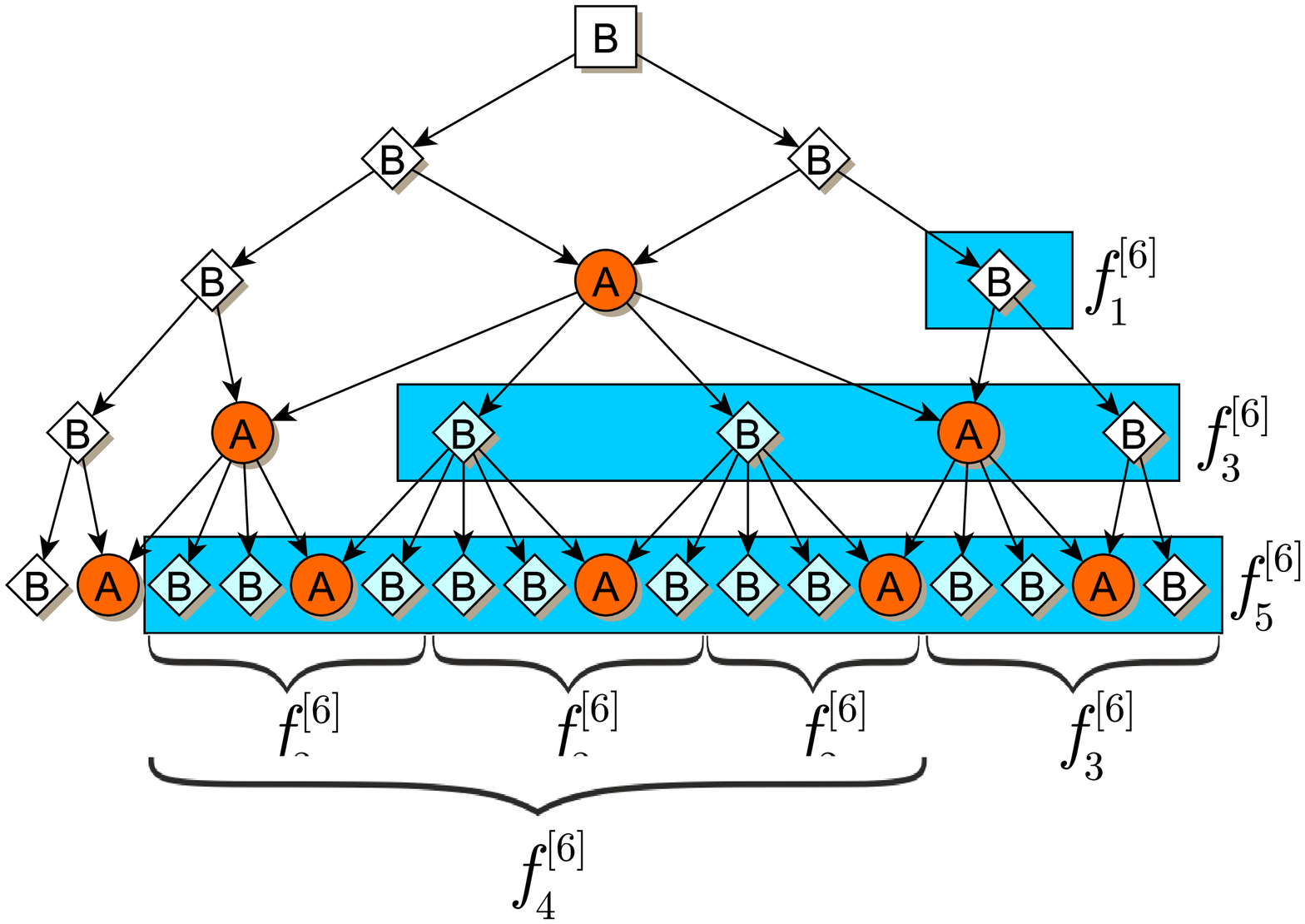}
	\caption{Pattern of \hpTT46 up to row 4 and some Fibonacci words}
	\label{fig:Pascal_46_AB5}
\end{figure}

\begin{theorem}\label{th:ab01}
	If $n\geq2$, then
	\begin{equation}\label{eq:fhq}
	01f_{2n-3}^{[q]}\equiv h_n^{[q]}
	\end{equation}
	and 
	$$ |f_{2n-3}^{[q]}|=F_{2n-2}^{(1,q-4)},$$
	where $1\equiv A$, $0\equiv B$ and $|h_n^{[q]}|=s_n$.
\end{theorem}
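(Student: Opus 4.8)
The plan is to prove the two identities separately but in parallel, each by induction on $n$, exploiting the fact that both the hyperbolic Pascal words $h_n^{[q]}$ and the $\{4,q\}$-Fibonacci words $f_i^{[q]}$ satisfy recursions that are essentially shifts of one another. The length formula $|f_{2n-3}^{[q]}|=F_{2n-2}^{(1,q-4)}$ follows from the known fact (stated in Section~2) that $|\widehat f_i|=F_i^{(a,b)}$ together with the observation that $f_i^{[q]}$ and $\widehat f_i$ (with $a=1$, $b=q-4$) differ only by a swap of the roles of the even and odd cases starting from the second term; so the indexing $f_{2n-3}^{[q]}$ picks out exactly the terms $\widehat f_{2n-2}$. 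The claim $|h_n^{[q]}|=s_n$ is immediate from the definition of $s_n$ in~\eqref{sn}, and then Lemma~\ref{lem:un}, equation~\eqref{eq:un}, gives $s_n=u_n+2=F_{2n-2}^{(1,q-4)}+2$; combining with Theorem~\ref{th:ab01}'s first assertion (a word of length $|f_{2n-3}^{[q]}|+2$ maps to $h_n^{[q]}$) the two length statements will be seen to be consistent, which is a useful sanity check but also the quantitative backbone of the induction.

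The core is~\eqref{eq:fhq}. First I would establish the base cases $n=2,3$ directly from the data in~\eqref{eq:01011}/\eqref{eq:01011} (and its $q$-analogue), i.e. $01f_1^{[q]}\equiv h_2^{[q]}$ and $01f_3^{[q]}\equiv h_3^{[q]}$, checking these against the explicit small rows of \hpt4q. For the inductive step I need a recursion for $h_n^{[q]}$ in terms of earlier $h$-words. This is exactly where the geometry of the lattice $\{4,q\}$ enters: using the growing method described in Section~1.1 (two neighbouring vertices of row $j$ produce a Type $A$ vertex in row $j+1$, the remaining descendants are Type $B$), together with the vertical symmetry of the triangle, one derives that the row-pattern unfolds according to a substitution that, after passing through the bijection $\phi$, coincides with the $\{4,q\}$-Fibonacci morphism $\sigma^{[q]}$ of~\eqref{eq:def4q}. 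Concretely I expect to show $h_n^{[q]}$ is obtained from $h_{n-1}^{[q]}$ (or from a two-step combination of $h_{n-1}^{[q]}$ and $h_{n-2}^{[q]}$) by applying $\sigma^{[q]}$ to the "interior" and handling the two winger vertices separately — the prefix $01\equiv BA$ in~\eqref{eq:fhq} is precisely the bookkeeping device that absorbs the left winger so that the interior transforms cleanly.

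Granting that combinatorial-geometric recursion for $h_n^{[q]}$, the inductive step is then formal: apply $\phi^{-1}$ to reduce to a statement about $\{0,1\}$-words, use the induction hypothesis $01f_{2n-5}^{[q]}\equiv h_{n-1}^{[q]}$ (and $01f_{2n-7}^{[q]}\equiv h_{n-2}^{[q]}$), apply Theorem~\ref{th:morf}, which tells us $\sigma^{[q]}(f_{2n-5}^{[q]})=f_{2n-3}^{[q]}$, and then match prefixes: $\sigma^{[q]}(01)=0^{q-4}10\,0^{q-4}1$ must be reconciled with the target prefix $01$ plus whatever the geometry prescribes for the two new winger vertices and the first interior block of row $n$. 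The main obstacle, and the step deserving the most care, is making the geometric recursion for $h_n^{[q]}$ precise — i.e. proving rigorously that "apply the growing rule to row $n-1$" is the same operation on $\{A,B\}$-strings as "apply $\sigma^{[q]}$, up to the fixed winger correction." This requires tracking how a Type $A$ vertex (two ascendants, $q-2$ descendants) and a Type $B$ vertex (one ascendant, $q-1$ descendants) each contribute to the next row's $A/B$ pattern, checking that a $B$ followed by an $A$-junction produces the block $0^{q-4}10$ while an isolated $A$ or the run inside produces $0^{q-4}1$, exactly as in~\eqref{eq:def4q}; once this local dictionary is verified the global induction closes with no further difficulty.
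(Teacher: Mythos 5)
Your proposal follows essentially the same route as the paper's proof: the paper likewise checks the base case $n=2$, observes that the row-to-row growing rule (apart from the first two vertices $BA$ of each row) is the substitution $\lambda(A)=B^{q-4}A$, $\lambda(B)=B^{q-4}AB$, which is exactly $\sigma^{[q]}$ under the bijection $\phi$, and closes the induction via Theorem~\ref{th:morf}, with the length statement read off from $|h_n^{[q]}|=s_n$ and Lemma~\ref{lem:un}. Your verification of the geometric substitution is left at the same informal level as in the paper, and your biperiodic-word shortcut for the length is a harmless variant, since only the lengths $|\widehat f_i|=F_i^{(a,b)}$ (with the one-index shift compensating the swapped parities), not the words themselves, are needed there.
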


\begin{proof}
	If $n=2$, then 
	$01f_{1}^{[q]}=010\equiv BAB=h_2^{[q]}$. 
	For higher values of $n$, examining the growing method of the hyperbolic Pascal triangles  row by row based on  Figure~\ref{fig:Pascal growing_4q2},  we can recognise that except for the first two elements it can be described by the morphism 
	\begin{equation}
	\lambda:\{A,B\}\rightarrow\{A,B\}* \quad \lambda(A)= (B)^{q-4} A,\quad \lambda(B)=(B)^{q-4} AB.\label{eq:def4qAB}
	\end{equation}
	After comparing $\lambda$ with the $\{4,q\}$-Fibonacci morphism $\sigma^{[q]}$  between every second $f_{i}^{[q]}$ according to Theorem~\ref{th:morf}, we can recognize that the growing methods (see Figure~\ref{fig:Pascal growing_4q2}, \eqref{eq:def4q}  and \eqref{eq:def4qAB})  are the same. 
	This proves the equation \eqref{eq:fhq}, because the first two elements of all rows ($n\geq2$) in \hpt4q are $B$ and $A$.
	
	The second statement is a consequence of Lemma  \ref{lem:un}.
\end{proof}

\section{Some properties of $\{4,q\}$-Fibonacci words}

Presumably, the connection between the $\{4,q\}$-Fibonacci words and the hyperbolic Pascal pyramids can open new opportunities for examining the Fibonacci words. We show some properties of $\{4,q\}$-Fibonacci words in which we use these connections.    

Let a binary word $u$ be the concatenation  of the words $v$ and $w$, thus $u=vw$. If we delete $w$ from the end of $u$, we get $v$. Let us denote it by $v=u\ominus w$. 
In words, the sign $\ominus$ acts so, that  the word after the sign is deleted from the end of the word before the sign (if it is possible).  
For example
$f_{4}= f_{5}\ominus f_{3}=01001\xcancel{010}$,
$f_{6}= f_{5}f_{5}\ominus f_{3}=01001010\cdot 01001\xcancel{010}$ 
or 
$f_{4}^{[6]}=(f_{3}^{[6]})^{3}\ominus f_{5}^{[6]}= 0010\cdot 0010\cdot 001\xcancel{0}$.

\begin{theorem}\label{lem:fi2} 	All  $\{4,q\}$-Fibonacci words with $(k\geq2)$ can be given in terms of the previous two odd indexed ones, namely
	\begin{eqnarray*} 
		f_{2k}^{[q]}&=& \left( f_{2k-1}^{[q]}\right)^{q-3} \ominus f_{2k-3}^{[q]},   \\
		f_{2k+1}^{[q]}&=& \left(  \left( f_{2k-1}^{[q]}\right)^{q-3} \ominus f_{2k-3}^{[q]} \right)f_{2k-1}^{[q]}.
	\end{eqnarray*}
\end{theorem}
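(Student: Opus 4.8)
The plan is to work directly from the recursive definition \eqref{eq:4dFib} and peel off the last copy of $f_{2k-1}^{[q]}$ in each expression, reducing the identities to an induction on $k$. First I would establish the base case $k=2$ by hand: since $f_{3}^{[q]}=f_{2}^{[q]}f_{1}^{[q]}$ and $f_{2}^{[q]}=(f_{1}^{[q]})^{q-4}f_{0}^{[q]}$, everything is an explicit short word, and one checks $(f_{3}^{[q]})^{q-3}\ominus f_{1}^{[q]}=f_{4}^{[q]}$ and the companion formula for $f_{5}^{[q]}$ by direct computation (comparing lengths via $|f_i^{[q]}|=F_{i}^{(1,q-4)}$ from Lemma~\ref{lem:un}/Theorem~\ref{th:ab01} to be sure the $\ominus$ is legitimate).

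For the inductive step I would first note that the second identity follows immediately from the first: by \eqref{eq:4dFib} with $2k+1$ odd, $f_{2k+1}^{[q]}=f_{2k}^{[q]}f_{2k-1}^{[q]}$, so substituting the (claimed) value of $f_{2k}^{[q]}$ gives exactly the stated expression. Hence the whole theorem reduces to proving the even-index formula $f_{2k}^{[q]}=(f_{2k-1}^{[q]})^{q-3}\ominus f_{2k-3}^{[q]}$. To get this, I would combine the two defining relations to express $f_{2k}^{[q]}$ in terms of odd-indexed words only: from \eqref{eq:4dFib}, $f_{2k}^{[q]}=(f_{2k-1}^{[q]})^{q-4}f_{2k-2}^{[q]}$, and $f_{2k-1}^{[q]}=f_{2k-2}^{[q]}f_{2k-3}^{[q]}$, so $f_{2k-2}^{[q]}=f_{2k-1}^{[q]}\ominus f_{2k-3}^{[q]}$ (valid because $f_{2k-1}^{[q]}$ genuinely ends in $f_{2k-3}^{[q]}$). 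Substituting gives
\[
f_{2k}^{[q]}=(f_{2k-1}^{[q]})^{q-4}\bigl(f_{2k-1}^{[q]}\ominus f_{2k-3}^{[q]}\bigr)=(f_{2k-1}^{[q]})^{q-3}\ominus f_{2k-3}^{[q]},
\]
where the last step is just the observation that deleting a suffix of the final block is the same as deleting that suffix from the whole concatenation. Notice this argument does not even need the induction hypothesis in a deep way — it only needs that $f_{2k-1}^{[q]}$ ends with $f_{2k-3}^{[q]}$, which is true because $f_{2k-1}^{[q]}=f_{2k-2}^{[q]}f_{2k-3}^{[q]}$ by definition. So the induction is essentially trivial once the suffix bookkeeping is set up.

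The main obstacle, such as it is, is purely one of care with the $\ominus$ operation: I must make sure at each use that the word being removed really is a suffix of the word it is removed from, since $\ominus$ is only partially defined. Concretely I would verify (a) that $f_{2k-3}^{[q]}$ is a suffix of $f_{2k-1}^{[q]}$ — immediate from $f_{2k-1}^{[q]}=f_{2k-2}^{[q]}f_{2k-3}^{[q]}$ — and (b) that $f_{2k-3}^{[q]}$ is a suffix of $(f_{2k-1}^{[q]})^{q-3}$, which follows from (a) since $q-3\geq2$ for $q\geq5$. A length check using $|f_{2k}^{[q]}|=F_{2k}^{(1,q-4)}=(q-3)F_{2k-1}^{(1,q-4)}-F_{2k-3}^{(1,q-4)}$ (a consequence of \eqref{eq:xi} with $a=1$, $b=q-4$, giving $ab+2=q-2$; one checks $(q-2)F_{2k-1}-F_{2k-3}=(q-3)F_{2k-1}+(F_{2k-1}-F_{2k-3})$ and $F_{2k-1}-F_{2k-3}=F_{2k-2}$ by the even-step of \eqref{eq:bi-fib}) confirms the arithmetic is consistent, which is a useful sanity check to include.
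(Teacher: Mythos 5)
Your proof is correct and takes essentially the same route as the paper's own argument: observe $f_{2k-2}^{[q]}=f_{2k-1}^{[q]}\ominus f_{2k-3}^{[q]}$ from $f_{2k-1}^{[q]}=f_{2k-2}^{[q]}f_{2k-3}^{[q]}$, substitute into $f_{2k}^{[q]}=(f_{2k-1}^{[q]})^{q-4}f_{2k-2}^{[q]}$, and deduce the second identity from $f_{2k+1}^{[q]}=f_{2k}^{[q]}f_{2k-1}^{[q]}$ (the induction scaffolding is indeed unnecessary, as you note). The only blemish is in the optional length sanity check, where the indexing is off by one: $|f_i^{[q]}|=F_{i+1}^{(1,q-4)}$ rather than $F_i^{(1,q-4)}$, so the correct identity to verify is $F_{2k+1}^{(1,q-4)}=(q-3)F_{2k}^{(1,q-4)}-F_{2k-2}^{(1,q-4)}$; this does not affect the main argument.
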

\begin{proof}
	Applying $f_{2k-1}^{[q]}= f_{2k-2}^{[q]}f_{2k-3}^{[q]}$ we can easily see that 
	$f_{2k-2}^{[q]}=f_{2k-1}^{[q]} \ominus f_{2k-3}^{[q]}$. 
	Furthermore, we can also see that  
	$f_{2k}^{[q]}=\left(f_{2k-1}^{[q]}\right)^{q-4}f_{2k-2}^{[q]}= \left(f_{2k-1}^{[q]}\right)^{q-4} f_{2k-1}^{[q]} \ominus f_{2k-3}^{[q]}=
	\left( f_{2k-1}^{[q]}\right)^{q-3} \ominus f_{2k-3}^{[q]}$.
	The second equation is the corollary of the first one.
\end{proof}

If $q$ tends to infinity, then the numbers of '0' in infinite $\{4,q\}$-Fibonacci words are relatively fast growing (see Table~\ref{tab:inffibon_word}). Now let us derive these ratios.

Let  $d_{i}^{[q]}$, $d_{i,0}^{[q]}$ and $d_{i,1}^{[q]}$ denote the numbers of  all, '0'  and '1' digits in the finite $\{4,q\}$-Fibonacci words, respectively.
Then, let the limit
$r_0^{[q]}=\lim_{i\to \infty} ({d_{i}^{[q]}}/{d_{i,0}^{[q]}})$ 
be  the inverse density of '0' digits in the infinite $\{4,q\}$-Fibonacci word.
Similarly, we denote the same density by
$r_1^{[q]}=\lim_{i\to \infty} ({d_{i}^{[q]}}/{d_{i,1}^{[q]}})$
in the case of '1' digits. 

\begin{theorem}\label{th:growing4q} The inverse density  of '0'  and '1' digits in the  infinite $\{4,q\}$-Fibonacci words are	
	\begin{eqnarray*}
		r_0^{[q]} &=& \frac{q-4+\sqrt{q(q-4)}}{2(q-4)},\\
		r_1^{[q]} &=& \frac{q-2+\sqrt{q(q-4)}}{2},
	\end{eqnarray*}
	where $q\geq5$. Moreover
	\begin{equation*}
	\lim_{q\to \infty} r_0^{[q]}=1 \quad \text{and} \quad \lim_{q\to \infty} r_1^{[q]}=\infty.
	\end{equation*}
\end{theorem}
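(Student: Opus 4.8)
The plan is to compute the digit counts $d_{i,0}^{[q]}$ and $d_{i,1}^{[q]}$ via the recurrences inherited from \eqref{eq:4dFib}, extract their asymptotic growth from the associated characteristic equation, and then take the ratios. First I would observe that applying the morphism $\sigma^{[q]}$ of \eqref{eq:def4q} (or directly concatenating as in \eqref{eq:4dFib}) gives, for the total length, $d_{i}^{[q]} = (q-4)d_{i-1}^{[q]} + d_{i-2}^{[q]}$ when $i$ is even and $d_{i}^{[q]} = d_{i-1}^{[q]} + d_{i-2}^{[q]}$ when $i$ is odd; the same split recurrence is satisfied separately by $d_{i,0}^{[q]}$ and by $d_{i,1}^{[q]}$, since concatenation just adds the respective digit counts. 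By Lemma~\ref{lem:un} (its equation \eqref{eq:xi}), every second term of each such biperiodic sequence satisfies the clean binary recurrence $x_i = (ab+2)x_{i-2} - x_{i-4}$ with $a=1$, $b=q-4$, i.e. $x_i = (q-2)x_{i-2} - x_{i-4}$. So I would pass to the even-indexed (or odd-indexed) subsequence of each of $d_i^{[q]}$, $d_{i,0}^{[q]}$, $d_{i,1}^{[q]}$: all three obey the \emph{same} second-order linear recurrence, whose characteristic equation is $t^2 - (q-2)t + 1 = 0$ with dominant root $\tau = \frac{(q-2) + \sqrt{(q-2)^2 - 4}}{2} = \frac{(q-2) + \sqrt{q(q-4)}}{2}$ (using $(q-2)^2 - 4 = q^2 - 4q = q(q-4)$).

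Next, since all three subsequences grow like $C\,\tau^{k}$ for their own constants $C$, the limits $r_0^{[q]} = \lim d_i^{[q]}/d_{i,0}^{[q]}$ and $r_1^{[q]} = \lim d_i^{[q]}/d_{i,1}^{[q]}$ exist and equal the ratios $C/C_0$ and $C/C_1$ of these leading constants. To pin down those constants I would use small initial data: from Table~\ref{tab:inffibon_word} / \eqref{eq:4dFib}, $f_1^{[q]}=0$, $f_2^{[q]}=0^{q-4}1$, so for instance $d_{1}^{[q]}=1$, $d_{1,0}^{[q]}=1$, $d_{1,1}^{[q]}=0$, and $d_{2}^{[q]}=q-3$, $d_{2,0}^{[q]}=q-4$, $d_{2,1}^{[q]}=1$, and one more term by the recurrence. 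Rather than solving for $C,C_0,C_1$ individually, a cleaner route is to note the identity $d_i^{[q]} = d_{i,0}^{[q]} + d_{i,1}^{[q]}$, so $r_0$ and $r_1$ are linked by $1/r_0^{[q]} + 1/r_1^{[q]} = 1$; combined with one further relation coming from the \emph{even}-vs-\emph{odd} step (the count of $1$'s is governed by how many copies of $f_{i-1}$ are appended, which over two steps contributes a factor tied to $ab+1 = q-3$ versus $ab+2 = q-2$), this determines both. Concretely, tracking the $1$-digits: each application of $\sigma^{[q]}$ replaces a $0$ by $0^{q-4}10$ and a $1$ by $0^{q-4}1$, so $d_{i,1}^{[q]} = d_{i-2,0}^{[q]} + d_{i-2,1}^{[q]} = d_{i-2}^{[q]}$; hence $\lim d_i^{[q]}/d_{i,1}^{[q]} = \lim d_i^{[q]}/d_{i-2}^{[q]} = \tau$, giving $r_1^{[q]} = \tau = \frac{q-2+\sqrt{q(q-4)}}{2}$ directly. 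Then $r_0^{[q]} = (1 - 1/r_1^{[q]})^{-1} = \frac{\tau}{\tau - 1}$, and substituting $\tau$ and simplifying (using $\tau - 1 = \frac{q-4+\sqrt{q(q-4)}}{2}$ and rationalizing) yields $r_0^{[q]} = \frac{q-4+\sqrt{q(q-4)}}{2(q-4)}$, matching the claim. Finally, the two limits as $q\to\infty$ follow from elementary asymptotics: $\sqrt{q(q-4)} = q\sqrt{1-4/q} = q - 2 + O(1/q)$, so $r_1^{[q]} \sim q - 2 \to \infty$, while $r_0^{[q]} = 1 + \frac{1}{r_1^{[q]} - 1} \to 1$.

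The bulk of the argument is bookkeeping, and the main obstacle I anticipate is purely the algebraic simplification of $\frac{\tau}{\tau-1}$ into the stated closed form for $r_0^{[q]}$ — one must rationalize $\frac{q-2+\sqrt{q(q-4)}}{q-4+\sqrt{q(q-4)}}$ by multiplying by the conjugate and carefully use $q(q-4) = (q-2)^2 - 4$ to cancel terms; a sign slip there is the most likely pitfall. A secondary subtlety worth stating carefully is the justification that the limit $r_0^{[q]} = \lim_i d_i^{[q]}/d_{i,0}^{[q]}$ is independent of the parity of $i$ and agrees with the limit taken along the even subsequence; this is immediate once one knows $d_{i}^{[q]}/d_{i,0}^{[q]}$ is a ratio of two solutions of the same recurrence with positive dominant root, but it should be noted rather than skipped. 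Everything else reduces to the recurrences already packaged in Lemma~\ref{lem:un} and the explicit action of $\sigma^{[q]}$ from \eqref{eq:def4q}.
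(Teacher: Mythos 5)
Your argument is correct, but it follows a genuinely different route from the paper. The paper transfers the whole question to the hyperbolic Pascal triangle: via Theorem~\ref{th:ab01} the odd-indexed word $f_{2n-3}^{[q]}$ is matched with row $n$ of \hpt4q, so the ratios become $s_n/b_n$ and $s_n/a_n$, which are evaluated from the ternary recurrence \eqref{recur1} and the explicit dominant-root coefficients $\alpha_s,\alpha_a,\alpha_b$ quoted from \cite{BNSz}; the even-indexed words are then handled separately through the $\ominus$-relations of Theorem~\ref{lem:fi2}, which yield $d_{2k}^{[q]}=(q-3)d_{2k-1}^{[q]}-d_{2k-3}^{[q]}$ and its analogues. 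You instead stay entirely inside the word combinatorics: the split recurrences for $d_i^{[q]}, d_{i,0}^{[q]}, d_{i,1}^{[q]}$, the reduction (as in the proof of Lemma~\ref{lem:un}, whose argument uses only the recurrence, not the specific sequence) to $x_i=(q-2)x_{i-2}-x_{i-4}$ with dominant root $\tau=\frac{q-2+\sqrt{q(q-4)}}{2}$, and above all the clean observation that $\sigma^{[q]}$ puts exactly one $1$ in the image of each letter, so $d_{i,1}^{[q]}=d_{i-2}^{[q]}$ and hence $r_1^{[q]}=\tau$ directly, with $r_0^{[q]}=\tau/(\tau-1)$ forced by $d_i^{[q]}=d_{i,0}^{[q]}+d_{i,1}^{[q]}$; the rationalization then reproduces the stated formulas, and the $q\to\infty$ limits are elementary. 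Your route is self-contained (no appeal to the triangle, to Theorem~\ref{lem:fi2}, or to the explicit coefficients from \cite{BNSz}) and treats both parities of $i$ uniformly, while the paper's route is what exhibits the geometric content of the theorem. Two small points to tidy when writing it up: the sentence speculating about needing ``one further relation'' tied to $q-3$ versus $q-2$ is superfluous once you state $d_{i,1}^{[q]}=d_{i-2}^{[q]}$, so drop it; and when you claim each subsequence grows like $C\tau^k$ you should note that the coefficient of $\tau^k$ is positive (the sequences are positive and unbounded, and the other root is $\tau^{-1}<1$), which also justifies taking the limit along both parity classes and combining them.
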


\begin{proof}
	Firstly, let $i$ be odd and large enough, so that $i=2n-3$. As $01f_{2n-3}^{[q]}\equiv h_n^{[q]}$ from Theorem \ref{th:ab01}, we consider  the ratio $s_n/a_n$ from the hyperbolic Pascal triangle  instead of the corresponding ratio ${d_{2n-3}^{[q]}}/{d_{2n-3,0}^{[q]}}$. 
	Not only the sequence $\{s_n\}$ can be described by the ternary recurrence relation \eqref{recur1} but also the sequences $\{a_n\}$ and $\{b_n\}$ (more details in \cite{BNSz}). The solutions of the characteristic equations of their recurrence relations are positive real numbers. Moreover, it is well-known that the limit of $s_n/a_n$ is the density of the coefficients of the largest solutions (all solutions are positive), i.e. 
	$\alpha_s=-1/2+(q-2) \sqrt{q^2-4q}/(2q(q-4))$, $\alpha_a=(2-q)(1/2)+(q^2-4q+2)\sqrt{q^2-4q}/(2q(q-4))$ and
	$\alpha_b=(q-3)(1/2)+(1-q)\sqrt{q^2-4q}/(2q)$. 
	Thus,
	\begin{eqnarray*}
		\lim_{n\to \infty} \frac{d_{2n-3}^{[q]}}{d_{2n-3,0}^{[q]}} =
		\lim_{n\to \infty} \frac{s_{n}}{b_{n}} = \lim_{n\to \infty} \frac{\alpha_s}{\alpha_b}&=& \frac{q-4+\sqrt{q^2-4q}}{2(q-4)}, \\
		\lim_{n\to \infty} \frac{d_{2n-3}^{[q]}}{d_{2n-3,1}^{[q]}} =
		\lim_{n\to \infty} \frac{s_{n}}{a_{n}} = \lim_{n\to \infty} \frac{\alpha_s}{\alpha_a}&=&\frac{q-2+\sqrt{q^2-4q}}{2}.
	\end{eqnarray*}
	Secondly, let $i$ be even.  According to Theorem \ref{lem:fi2} all the even indexed $\{4,q\}$-Fibonacci words can be derived in terms of the previous two elements. We also obtain, that
	$d_{2k}^{[q]}=(q-3)d_{2k-1}^{[q]}-d_{2k-3}^{[q]}$, $d_{2k,0}^{[q]}=(q-3)d_{2k-1,0}^{[q]}-d_{2k-3,0}^{[q]}$ and 
	$d_{2k,1}^{[q]}=(q-3)d_{2k-1,1}^{[q]}-d_{2k-3,1}^{[q]}$.
	From it we have
	\begin{eqnarray*}
		\lim_{n\to \infty} \frac{d_{2k}^{[q]}}{d_{2k,0}^{[q]}} =
		\lim_{n\to \infty} \frac{(q-3)d_{2k-1}^{[q]}-d_{2k-3}^{[q]}}{(q-3)d_{2k-1,0}^{[q]}-d_{2k-3,0}^{[q]}} =\lim_{n\to \infty} \frac{d_{2k-1}^{[q]}}{d_{2k-1,0}^{[q]}}, 
	\end{eqnarray*}
	and  the case for digits '$1$' is similar.
	For the limits of $r_0^{[q]}$ and $r_1^{[q]}$, the statement is obviously true.
\end{proof}

Naturally, if $q=5$ the results of Theorem \ref{th:growing4q} give  the known $r_0^{[5]}=\varphi$ and $r_1^{[5]}=1+\varphi$ values, where $\varphi$ is the golden ratio.

\pagebreak[2]
Finally, here are some properties, which can directly be obtained from  the properties of \hpt4q:
\begin{itemize}
	\item The words $01f_{2n-3}^{[q]}$  ($n\geq2$) are palindromes.
	\item The subword $11$ never occurs in $\{4,q\}$-Fibonacci words.
	\item The subword $00\ldots0$ ($q-2$ digits 0) never occurs in words $f_{i}^{[q]}$.	
	\item The last two digits of finite $\{4,q\}$-Fibonacci words are alternately 01 and 10. 
	\item The infinite $\{4,q\}$-Fibonacci word has $n+1$ distinct subwords of length $n$, where $n\leq q-2$. In case $n=q-2$, they are  $100\ldots01$ with $q-4$ digits $0$ and the others are with only one digit $1$, in case  $n<q-2$ the subwords have at most one digit $1$.
\end{itemize}

\end{document}